\newcommand\set{\mathbb}
\newcommand\cat{\mathcal}
\newcommand\id{\mathds{1}}
\newcommand\ideal{\mathfrak}
\newcommand\Lie{\mathfrak}
\newcommand\Kac[1]{\tilde{\Lie #1 }}
\newcommand\Ker{\text{Ker}}
\newcommand\Cok{\text{Coker}}
\newcommand\Hom{\text{Hom}}
\newcommand\Ext{\text{Ext}}
\newcommand\Gr{\text{K}_0}
\newcommand\tensor{\otimes}
\newcommand\rank{\text{rank}}
\newcommand\codim{\text{codim}}
\newcommand\pd{\text{proj dim}}
\renewcommand\epsilon{\varepsilon}
\renewcommand\phi{\varphi}
\renewcommand\tilde{\widetilde}
\newcommand\inner[2]{\left( #1 , #2 \right)}
\newcommand\cartan[2]{\left\langle #1 , #2^\vee \right\rangle}
\newcommand\defineset[2]{\lbrace #1 \ \vert\ #2 \rbrace}
\newcommand\indexingset{\mathcal I}
\newtheorem{theorem}{Theorem}
\newtheorem{lemma}[theorem]{Lemma}
\newtheorem{definition}[theorem]{Definition}
\theoremstyle{nonumberplain}
\newtheorem{remark}{Remark}
\newtheorem{proof}{Proof}
\date{\today}
\title{Rank 2 fusion rings are complete intersections}
\author{Troels Bak Andersen}
\begin{document}

\maketitle

\begin{abstract} 
  We give a non-constructive proof that fusion rings attached to a simple complex Lie algebra of rank 2 are complete intersections. 
\end{abstract}

\section{Introduction}

Attached to a simple complex Lie algebra $\Lie g$ and a natural number $k \geq 0$ we have the level $k$ fusion ring $F_k$. As a $\set Z$-module it is free and finitely generated over the dominant weights of $\Lie g$ of level at most $k$ and has a product structure making it a commutative, associative, unital ring. It can be described as a quotient of the representation ring $\mathcal R$ by the ``fusion ideal'', which has an infinite generating set enumerated by all dominant weights of level strictly higher than $k$. We consider the problem of finding a minimal generating set of this ideal. 

If $\rank(\Lie g) = r$ then as a commutative, associative $\set Z$-algebra $\mathcal R$ is isomorphic to the polynomial ring $R = \set Z[X_1, \dots, X_r]$ and $F_k$ to the quotient $R/I_k$, where $I_k \subseteq R$ corresponds to the fusion ideal. Then, as $\codim(I_k,R) = r$, the best we can hope for is a generating set of $I_k$ consisting of $r$ elements, which in turn must constitute an $R$-regular sequence meaning $R/I_k$ is a complete intersection ring. 

This goal was achieved for $\Lie g$ of type $A_r, r \geq 1$ by \cite{Gep91} and type $C_r, r \geq 2$ by \cite{BMRS92} by finding an explicit potential function $V_{k+h^\vee}(X_1, \dots, X_r) \in R$ for each level $k$ whose $r$ partial derivatives generate the fusion ideal. For Lie algebras of the remaining types we will have to use a different approach as it was proven by \cite{BR06} that in these cases the fusion ideal cannot be described by analogous potential functions. 


In this paper we prove non-constructively that the fusion ideal for a rank 2 fusion ring can always be generated by 2 elements. This settles in affirmative the question above for the remaining rank 2 case of type $G_2$ though it doesn't give explicit generators. 

The author owes thanks to Professor Shrawan Kumar, who introduced him to the problem during his visit at UNC Chapel Hill in the fall 2012, and also helped him with the first outline of a proof. He would also like to thank Professor Mohan Kumar at Washington University in St. Louis and Professor Holger Andreas Nielsen at Aarhus University for great help at getting the last details right. 

The author was supported by the Danish National Research Foundation center of Excellence, Center  for  Quantum  Geometry  of  Moduli  Spaces  (QGM).

\section{Definition and some properties of fusion rings}

Consider an abelian category $\cat F$, which is $\set C$-linear and semisimple, i.e., the $\Hom$-spaces are finite-dimensional $\set C$-vector spaces, composition of morphisms is $\set C$-linear and there is a countable collection of simple objects $\defineset{V_\lambda}{\lambda \in \indexingset}$, such that $\dim_{\set C} \Hom_{\cat F}(V_\lambda, V_\mu) = \delta_{\lambda,\mu}$, and any object in $\cat F$ is isomorphic to a finite direct sum of the $V_\lambda$. Assume furthermore that the category is rigid, monoidal and that the unit object is simple. The Grothendieck group $F = \Gr(\cat F)$ is called a fusion ring, and if there are only finitely many isomorphism classes of simple objects, then $F$ is called a rational fusion ring. 

Examples of settings associated to a semisimple Lie algebra $\Lie g$ in which fusion rings arise includes: 
\begin{enumerate}[{Case} 1]
\item\label{exone} The category of finite-dimensional $\Lie g$-modules.
\item\label{extwo} The category of fixed-level representations of the affine Kac-Moody algebra $\Kac g$ associated to $\Lie g$, see e.g.\ \cite{Bea96}.
\item\label{exthree} The category of tilting modules of the associated quantum group at a complex root of unity, see e.g.\ \cite{And92}.
\item\label{exfour} The category of rational modules of the corresponding semisimple, simply connected algebraic group over a field of positive characteristic, see e.g. \cite{AP95}. 
\end{enumerate}
Of these four examples the last three lead to rational fusion rings. In section \ref{setup} we will review the Cases \ref{exone} and \ref{extwo}.

Let $\defineset{[\lambda]}{\lambda \in \indexingset}$ denote a $\set Z$-basis for $F$ corresponding to the isomorphism classes of the simple objects in $\cat F$. The ring structure on $F$ relative to this basis is given by
\begin{equation*}
  [\lambda][\mu] = \sum_{\nu \in \indexingset} N_{\lambda,\mu}^\nu [\nu],
\end{equation*}
where $N_{\lambda,\mu}^\nu = \dim_{\set C} \Hom_{\cat F}(V_\lambda \tensor V_\mu, V_\nu) \geq 0$ is the multiplicity with which $V_\nu$ occurs in a decomposition of $V_\lambda \tensor V_\mu$ into simple summands. The duality functor $V \mapsto V^*$ on $\cat F$ has $V \simeq (V^*)^*$, so it maps simple objects to simple objects, giving an involution $\lambda \mapsto \lambda^*$ of $\indexingset$, which induces an antiautomorphism of $F$. If we let $\lambda_0 \in \indexingset$ correspond to the unit object in $\cat F$, this means that $N_{\lambda, \mu}^{\lambda_0} = \delta_{\lambda, \mu^*}$ together with $N_{\lambda^*, \mu^*}^{\nu^*} = N_{\lambda,\mu}^\nu$. We end up with an axiomatization of a fusion ring independent of the category theory:

\begin{definition}
Let $\indexingset$ be a countable set. A fusion rule on $\indexingset$ is a set of non-negative integers $N = \defineset{N_{\lambda, \mu}^\nu \in \set N}{\lambda, \mu, \nu \in \indexingset}$ such that
  \begin{enumerate}[F1]
  \item \label{fusionRuleComm} $N_{\lambda, \mu}^\nu = N_{\mu, \lambda}^\nu$ for all $\lambda, \mu, \nu \in \indexingset$,
  \item \label{fusionRuleAss} $\sum_{\eta \in \indexingset} N_{\lambda, \mu}^\eta N_{\eta, \nu}^\zeta = \sum_{\eta \in \indexingset} N_{\mu, \nu}^\eta N_{\lambda, \eta}^\zeta$ for all $\lambda, \mu, \nu, \zeta \in \indexingset$,
  \item \label{fusionRuleConj} there is an element $\lambda_0 \in \indexingset$ and an associated map $\lambda \mapsto \lambda^*$ given by $N_{\lambda, \mu}^{\lambda_0} = \delta_{\mu, \lambda^*}$ such that $(\lambda^*)^* = \lambda$ and $N_{\lambda^*, \mu^*}^{\nu^*} = N_{\lambda, \mu}^\nu$ for all $\lambda, \mu, \nu \in \indexingset$. 
  \end{enumerate}
  The associated fusion ring $F = F(N)$ is the free $\set Z$-module with basis $\defineset{[\lambda]}{\lambda \in\nobreak \indexingset}$ equipped with multiplication $[\lambda][\mu] = \sum_{\nu \in \indexingset} N_{\lambda, \mu}^\nu [\nu]$. If the indexing set $\indexingset$ is finite the fusion ring has finite rank and we say it is rational. 
\end{definition}

The assumptions F\ref{fusionRuleComm}-\ref{fusionRuleConj} translates directly into the multiplication in $F$ being commutative and associative and the involution being a homomorphism. 

Given an $x \in F$ write it in the basis given by $\indexingset$ as $x = \sum_{\lambda \in \indexingset} n_\lambda [\lambda]$ and define a $\set Z$-linear form $t(x) = n_{\lambda_0}$. By assumption it satisfies $t([\lambda][\mu]^*) = N_{\lambda, \mu^*}^{\lambda_0} = \delta_{\lambda, \mu}$. For $x,y \in F$ define a $\set Z$-bilinear form $\inner{x}{y} = t(xy^*)$. It is positive-definite since $\inner{[\lambda]}{[\mu]} = \delta_{\lambda, \mu}$.

Assume from now on, that the indexing set $\indexingset$ is finite. Then the bilinear form defines an isomorphism of $\set Z$-modules of $F$ with $\Hom_{\set Z}(F, \set Z)$ by $x \mapsto [f_x: y \mapsto \inner{x}{y}]$. If we give $\Hom_{\set Z}(F, \set Z)$ an $F$-action by $zf(y) = f(z^*y)$, then this isomorphism is actually an isomorphism of $F$-modules since $f_{zx}(y) = \inner{zx}{y} = \inner{x}{z^*y} = zf_x(y)$ for all $z \in F$. 

The exact sequence $0 \to \set Z \to \set Q \to \set Q/\set Z \to 0$ induces to
\begin{equation*}
  0 \to \Hom_{\set Z}(F, \set Z) \to \Hom_{\set Z}(F, \set Q) \to \Hom_{\set Z}(F, \set Q/\set Z) \to 0.
\end{equation*}
Since $\set Q$ and $\set Q/\set Z$ are injective $\set Z$-modules $\Hom_{\set Z}(F, \set Q)$ and $\Hom_{\set Z}(F, \set Q/\set Z)$ are injective $F$-modules. This means that $\Hom_{\set Z}(F, \set Z)$ has finite injective dimension, and by the module isomorphism $F \simeq \Hom_{\set Z}(F, \set Z)$ constructed above, we conclude that so has $F$, i.e., rational fusion rings are Gorenstein. 

Let $R = \set Z[X_1, \dots, X_r]$ be a polynomial ring, that maps surjectively to $F$ with kernel $I$, and assume that $R$ has minimal rank. We consider finding the minimal number of generators of $I$ in $R$. Since $I$ has codimension $r$ in $R$, this is a lower bound on that number. The set of integers $\set Z$ is a 1-dimensional discrete valuation ring, so it is a regular ring. Then also $R$ is a regular ring. If we can prove existence of a generating set consisting of exactly $r$ elements, then they must constitute a regular $R$-sequence, i.e., $R/I$ is a complete intersection ring. In Theorem \ref{UNCproof} we prove, that this is always the case if $r=2$.

\section{The Lie algebra setup}\label{setup}

Consider a simple complex Lie algebra $\Lie g$ of rank $r$ with root system $\Phi$, Weyl group $W$ and weight lattice $P$. The irreducible finite-dimensional representations $V(\lambda)$ are characterized by having highest weights in $P_+ = \defineset{\lambda \in P}{0 \leq \cartan{\lambda}{\alpha}\text{ for all } \alpha \in \Phi_+}$. The representation ring $\mathcal R$ has as elements formal differences of isomorphism classes of finite-dimensional representations of $\Lie g$ with addition given by direct sums of representations and multiplication given by tensor product over $\set C$. We write $[\lambda]$ short for the isomorphism class of the irreducible representation with highest weight $\lambda = a_1\omega_1 + \dots + a_r\omega_r$, where the $\omega_i$ are the fundamental weights. The ring structure is encoded in the structure constants 
\begin{equation*}
M_{\lambda,\mu}^\nu = \dim_{\set C} \Hom_{\Lie g}(V(\lambda) \tensor V(\mu) \tensor V(\nu^*), \set C),
\end{equation*}
for which $V(\lambda) \tensor V(\mu) = \bigoplus_{\nu \in P_+} M_{\lambda,\mu}^\nu V(\nu)$, where $\nu^* = -w_0(\nu)$ is involution on $P$ given by the longest element $w_0 \in W$. 

This is an elaboration of Case \ref{exone}: $M = \defineset{M_{\lambda, \mu}^\nu}{\lambda, \mu, \nu \in P}$ defines a (non-rational) fusion rule on $P$ and its associated fusion ring is the representation ring. In the following we elaborate on Case \ref{extwo}, how to obtain a rational fusion ring as a quotient of $\mathcal R$. Notice for now, that $\mathcal R$ is freely generated by the isomorphism classes $X_i = [\omega_i]$ of the fundamental weights, i.e., it has a commutative presentation as the polynomial ring $R = \set Z[X_1, \dots, X_r]$. 

For a given level $k \geq 0$ we restrict to the alcove $P_k = \defineset{\lambda \in P_+}{\cartan{\lambda}{\beta_0} \leq k}$, where $\beta_0$ is the highest root in $\Phi$. Notice that the involution preserves the positive part of the root system, therefore it fixes the highest root, and consequently it restricts to an involution of $P_k$. 

Let $V_{\set P^1}^\dag(\lambda,\mu,\nu^*)$ denote the space of conformal blocks on $\set P^1$ with three distinct marked points and the weights $\lambda, \mu, \nu^* \in P_k$ attached to them with central charge $k + h^\vee$. We refer to \cite[2]{Bea96} or \cite[3.1]{BK09} for a rigorous definition of the spaces, but here we focus on the dimensions 
\begin{equation*}
  N_{\lambda, \mu}^\nu = \dim_{\set C} V_{\set P^1}^\dag(\lambda,\mu,\nu^*).
\end{equation*}
We will see in \eqref{fusionCoef} how these numbers can be calculated combinatorially. 

The claim is now that $N_k = \defineset{N_{\lambda, \mu}^\nu}{\lambda, \mu, \nu \in P_k}$ defines a rational fusion rule on $P_k$ with the associated fusion ring $F_k = F(N_k)$. That commutativity (F\ref{fusionRuleComm}) is satisfied is direct from the definition. That associativity (F\ref{fusionRuleAss}) is satisfied is a deep theorem, first proven as a result of the factorization rules in \cite{TUY89}. An analogue of \cite[2.8]{Bea96} shows that $V
_{\set P^1}^\dag(\lambda, \mu, \nu^*) \simeq V_{\set P^1}^\dag(\lambda^*, \mu^*, \nu)$ so $N_{\lambda, \mu}^\nu = N_{\lambda^*, \mu^*}^{\nu^*}$, which concludes that involution is a homomorphism (F\ref{fusionRuleConj}). Since $P_k$ is finite, $F_k$ is rational. By abuse of notation let also $[\lambda]$ denote a basis of $F_k$. 

Let $W_{k+h^\vee}$ denote the affine Weyl group generated by $W$ and the translation $\lambda \mapsto \lambda + (k + h^\vee)\beta_0$, where $h^\vee = \cartan{\rho}{\beta_0} + 1$ is the dual Coxeter number and $\rho = \omega_1 + \dots + \omega_r$. Then $P_k$ is a fundamental domain for the ($\rho$-shifted) $W_{k+h^\vee}$-action on $P$.

Now define a $\set Z$-linear map 
\begin{equation*}
  \pi: \mathcal R \to F_k
\end{equation*}
as follows. If a weight $\lambda \in P^+$ lies on an affine wall, i.e., $\lambda + \rho$ is fixed by an element of $W_{k+h^\vee}$, then $\pi([\lambda]) = 0$. Otherwise there is a unique $w \in W_{k+h^\vee}$ such that $w.\lambda = w(\lambda + \rho) - \rho \in P_k$, and we set $\pi([\lambda]) = \det(w)[w.\lambda]$. In particular $\pi([\lambda]) = [\lambda]$ for $\lambda \in P_k$, justifying the abuse of notation. 

It is a fact that $\pi$ is a ring homomorphism for all semisimple Lie algebras, cf. \cite[3.7]{BK09}. If $\lambda, \mu \in P_k$ this means that 
\begin{equation*}
  \sum_{\xi \in P_+} M_{\lambda, \mu}^\xi \pi([\xi]) = \sum_{\nu \in P_k} N_{\lambda, \mu}^\nu[\nu],
\end{equation*}
i.e.,
\begin{equation}\label{fusionCoef}
  N_{\lambda,\mu}^\nu = \sum_{w \in W_{k+h^\vee}}\det(w)M_{\lambda,\mu}^{w.\nu}.
\end{equation}

This identifies the fusion ring $F_k$ with the quotient of $\mathcal R$ by the kernel of the homomorphism. We consider the ideal $I_k \subseteq R$ corresponding to $\Ker(\pi) \subseteq \mathcal R$. The ideal is generated by the polynomials in $\set Z[X_1, \dots, X_r]$, which corresponds to the elements $[\lambda] - \det(w)[w.\lambda] \in \mathcal R$ with $\lambda, w.\lambda \in~P_+$ and $w \in W_{k+h^\vee}$. 

As mentioned in the introduction, when $\Lie g$ has type $A_r$ or $C_r$ it is known, that $I_k$ can be generated by $r$ elements, in fact in both cases the ideal is generated by the polynomials corresponding to $[(k+1)\omega_1]$ and $[k\omega_1 + \omega_2]$. 

For $r=2$ this only leaves the case $G_2$ not taken care of. The paper \cite{BK09} conjecturally described this fusion ideal as the radical of an ideal generated by three specific elements and later \cite{Dou13} found an actual generating set consisting of the three elements
\begin{equation*}
  \begin{cases}
    [\frac{k+2}{2}\omega_2] + [\frac{k}{2}\omega_2],\ [\omega_1 + \frac{k}{2}\omega_2]\text{ and }[3\omega_1 + \frac{k-2}{2}\omega_2] & \text{if }k\text{ even} \\
    [\frac{k+1}{2}\omega_2],\ [2\omega_1 + \frac{k-1}{2}\omega_2]\text{ and }[3\omega_1 + \frac{k-1}{2}\omega_2] + [3\omega_1 + \frac{k-3}{2}\omega_2] & \text{if }k\text{ odd}
  \end{cases}
\end{equation*}
In Theorem \ref{UNCproof} it is shown, that the ideal always has a generating set consisting of two elements, though there isn't given given any method to do so.

\section{Main result}

We first prove a technical lemma. 

\begin{lemma}\label{shiftingLemma}
  Let $A$ be a noetherian ring, let $B$ be a ring that is a finitely generated $A$-module and let $b \in B$. Consider $B$ as an $A[X]$-module by mapping $X \mapsto b$. Then there is an isomorphism of $A[X]$-modules
  \begin{equation*}
    \Ext_A^i(B,A) \simeq \Ext_{A[X]}^{i+1}(B,A[X])
  \end{equation*}
for all $i \geq 0$. 
\end{lemma}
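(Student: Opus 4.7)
The plan is to set up a convenient two-term $A[X]$-resolution of $B$ and let the long exact Ext sequence do the shifting. The candidate is the short exact sequence of $A[X]$-modules
\begin{equation*}
  0 \to K \xrightarrow{X - b} K \to B \to 0, \qquad K = A[X] \tensor_A B,
\end{equation*}
where the right map is evaluation $f(X) \tensor b' \mapsto f(b) b'$ and the left map is multiplication by $X \tensor 1 - 1 \tensor b$. Identifying $K$ with the polynomial ring $B[X]$ makes this transparent: the kernel of $B[X] \to B$, $f \mapsto f(b)$, is the principal ideal generated by the monic polynomial $X - b$, which is therefore a non-zero-divisor in $B[X]$.

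To compute the middle Ext, I would use that $A[X]$ is free over $A$, so tensoring any projective $A$-resolution $P_\bullet \to B$ with $A[X]$ gives a projective $A[X]$-resolution of $K$. Hom-tensor adjunction then yields $\Ext^i_{A[X]}(K, A[X]) \simeq \Ext^i_A(B, A[X])$. Since $A$ is noetherian and $B$ is finitely generated, $B$ admits a resolution by finitely generated projectives, and $\Ext^i_A(B, -)$ commutes with arbitrary direct sums. Combined with $A[X] = \bigoplus_{n \geq 0} A \cdot X^n$ as an $A$-module, this gives
\begin{equation*}
  \Ext^i_{A[X]}(K, A[X]) \simeq \Ext^i_A(B, A)[X].
\end{equation*}

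Next I would trace the map on Ext induced by $X - b$ on $K$. Lifting multiplication by $b$ on $B$ (an $A$-linear endomorphism) to a chain map on $P_\bullet$ and running it through the adjunction shows this induced endomorphism is exactly $X - b^*$, where $b^*$ is the functorial contravariant action on $\Ext^i_A(B, A)$. This map is injective (compare leading $X$-coefficients), and its cokernel is $\Ext^i_A(B, A)$ via $\sum_j e_j X^j \mapsto \sum_j (b^*)^j(e_j)$; on this quotient $X$ acts as $b^*$, which matches the $A[X]$-structure that $\Ext^i_A(B, A)$ inherits from the $A[X]$-structure on $B$. Splicing this into the long exact Ext sequence, the injectivity of $X - b^*$ in every degree forces
\begin{equation*}
  \Ext^{i+1}_{A[X]}(B, A[X]) \simeq \Cok(X - b^*) \simeq \Ext^i_A(B, A)
\end{equation*}
as $A[X]$-modules. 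The main delicate step is the $A[X]$-structure bookkeeping, in particular verifying that the ``$b$'' acting on $K$ really becomes $b^*$ acting on Ext; this is routine but exactly where a sign or indexing error would creep in.
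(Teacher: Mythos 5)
Your proof is correct and follows essentially the same route as the paper: the short exact sequence $0 \to B[X] \xrightarrow{X-b} B[X] \to B \to 0$, the identification $\Ext^i_{A[X]}(A[X]\tensor_A B, A[X]) \simeq A[X]\tensor_A \Ext^i_A(B,A)$, and the long exact Ext sequence identifying $\Ext^{i+1}_{A[X]}(B,A[X])$ with the cokernel of $X-b$. Your explicit verification that $X-b^*$ is injective (so the connecting map is onto the full cokernel) and the bookkeeping of the induced $A[X]$-structure are welcome details that the paper handles more tersely via the Five Lemma.
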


\begin{proof}
Set $B[X] = A[X] \tensor_A B$ with trivial $A[X]$-action. Multiplication on $B[X]$ by $X-b$ fits in to a short exact sequence
\begin{equation*}
  0 \to B[X] \mathop{\to}\limits^{X-b} B[X] \to B \to 0.
\end{equation*}
Apply the left exact functor $\Hom_{A[X]}(-, A[X])$
\begin{equation*}
{\tiny
  \xymatrix{
    \dots \ar[r] & \Ext_{A[X]}^i(B[X],A[X]) \ar[r] \ar[d]^\simeq & \Ext_{A[X]}^i(B[X],A[X]) \ar[r] \ar[d]^\simeq & \Ext_{A[X]}^{i+1}(B,A[X]) \ar[r] & \dots \\
     & A[X] \tensor_A \Ext_A^i(B,A) \ar[r] & A[X] \tensor_A \Ext_A^i(B,A) \ar[r] & \Cok \ar[r] & 0
  }
}
\end{equation*}
where the vertical isomorphisms come from the fact that $A \to A[X]$ is flat. The lower homomorphism makes the diagram commutative so by naturality of the isomorphisms it is multiplication by $X-b$ identifying the cokernel with $\Ext_A^i(B,A)$. Diagram chasing gives us a map
\begin{equation*}
  \Ext_A^i(B,A) \to \Ext_{A[X]}^{i+1}(B,A[X])
\end{equation*}
which is an isomorphism by the Five Lemma. 
\end{proof}

\begin{theorem}\label{UNCproof}
  Let $I \subseteq R = \set Z[X,Y]$ be an ideal such that we have an isomorphism of $R$-modules
  \begin{equation}\label{fusionDuality}
    \Hom_{\set Z}(R/I, \set Z) \simeq R/I.
  \end{equation}
  Then $I$ is generated by an $R$-regular sequence of length 2.
\end{theorem}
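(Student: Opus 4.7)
The plan is to reformulate the duality hypothesis in terms of $\Ext$-groups over $R$ via two applications of Lemma \ref{shiftingLemma}, and then to extract the number of generators of $I$ from the shape of the minimal free resolution of $R/I$.

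First I would verify that $R/I$ is a finitely generated free $\set Z$-module: torsion-freeness is automatic because $\Hom_{\set Z}(R/I,\set Z)$ embeds in its rationalization, and finite generation follows from the hypothesis by a cardinality argument (the countable cyclic $R$-module $R/I$ cannot be isomorphic to its $\set Z$-dual if it has infinite $\set Z$-rank). Then I would apply Lemma \ref{shiftingLemma} first with $A = \set Z[Y]$ and $b = \bar X$, and then with $A = \set Z$ and $b = \bar Y$, obtaining the composite $R$-module isomorphism
\begin{equation*}
  \Ext_R^{i+2}(R/I,R) \simeq \Ext_{\set Z[Y]}^{i+1}(R/I,\set Z[Y]) \simeq \Ext_{\set Z}^i(R/I,\set Z)
\end{equation*}
for every $i \geq 0$.

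The right-hand side vanishes for $i \geq 1$ because $R/I$ is $\set Z$-free, and for $i = 0$ it is isomorphic to $R/I$ by hypothesis. Combined with $\Hom_R(R/I,R) = 0 = \Ext_R^1(R/I,R)$, which follows from $\grade(I) \geq 2$ (integrality of $R/I$ over $\set Z$ forces $\codim(I) \geq 2$ in the Cohen--Macaulay ring $R$), one concludes $\Ext_R^i(R/I,R) = 0$ for $i \neq 2$ and $\Ext_R^2(R/I,R) \simeq R/I$. Since $R$ is regular, $\pd_R(R/I)$ is finite, and localizing at each maximal ideal and invoking Auslander--Buchsbaum then pins down $\pd_R(R/I) = 2$.

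The minimal free resolution of $R/I$ over $R$ therefore has the form $0 \to R^a \to R^b \to R \to R/I \to 0$. Applying $\Hom_R(-,R)$ and feeding in the $\Ext$ computation produces a second free resolution $0 \to R \to R^b \to R^a \to R/I \to 0$, which is again minimal because dualizing preserves the property that differential matrices have entries in the relevant maximal ideals; comparing Betti numbers of the two resolutions forces $a = 1$. Localizing at the generic point $(0) \subseteq R$, where $(R/I)_{(0)} = 0$, the alternating sum of ranks vanishes, giving $b = a+1 = 2$. Hence $I$ is generated by two elements, and since $\grade(I) = 2$ matches the number of generators in the Cohen--Macaulay ring $R$, any such pair forms an $R$-regular sequence.

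The step I expect to be the main obstacle is the Betti-number bookkeeping in the last paragraph: confirming rigorously that the dualized resolution is again minimal, so that $a = 1$ is really forced rather than merely consistent. One can bypass this by invoking the classical structure theory of grade-2 perfect Gorenstein ideals, but it is the point where the self-duality hypothesis gets concretely used.
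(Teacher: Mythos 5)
Your reduction of the hypothesis to the statements $\Ext_R^i(R/I,R)=0$ for $i\neq 2$, $\Ext_R^2(R/I,R)\simeq R/I$ and $\pd_R(R/I)=2$ is sound and coincides with the first half of the paper's proof (the paper applies Lemma \ref{shiftingLemma} in the order $\set Z$, then $\set Z[X]$, but the order is immaterial). The gap is exactly the step you flagged, and it is not bookkeeping --- it is the crux of the theorem. Over the non-local, non-graded ring $R=\set Z[X,Y]$ there is no notion of a \emph{minimal} free resolution, so neither ``the minimal free resolution has the form $0\to R^a\to R^b\to R\to R/I\to 0$'' nor ``the dualized resolution is again minimal because the differentials have entries in the relevant maximal ideals'' is meaningful. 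Comparing your two resolutions by the generalized Schanuel lemma yields only $R^a\oplus R^b\oplus R\simeq R\oplus R^b\oplus R^a$, which is vacuous, and the Euler characteristic gives $b=a+1$ without pinning down $a$. Your argument does go through after localizing at a maximal ideal $\ideal m\supseteq I$, where minimal resolutions exist and dualizing preserves minimality; but that only recovers Serre's local theorem (cited at the end of the paper) that $I_{\ideal m}$ is generated by a regular sequence of length 2. The entire difficulty is globalizing this --- a priori Forster--Swan only bounds the global number of generators by 3 --- and the ``classical structure theory of grade-2 perfect Gorenstein ideals'' you propose as a fallback is likewise a local theory. (Note also that even writing down a finite \emph{free} resolution requires knowing that the second syzygy, a priori only projective, is free, which already needs Quillen--Suslin.)

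The paper closes this gap by a different mechanism: it interprets a unit $e\in R/I\simeq\Ext_R^2(R/I,R)\simeq\Ext_R^1(I,R)$ as a short exact sequence $0\to R\to M\to I\to 0$, shows $\Ext_R^1(M,R)=0$ because the connecting map $\Hom_R(R,R)\to\Ext_R^1(I,R)\simeq R/I$ sends $\id$ to the unit $e$ and is therefore surjective (this is where the self-duality is used \emph{globally}), kills the higher $\Ext$'s using $\pd_R(R/I)=2$, concludes that $M$ is projective, and invokes Quillen--Suslin together with a localization at a prime not containing $I$ to get $M\simeq R^2$; the images of the two generators of $M$ then generate $I$. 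You would need this extension-class construction, or an equivalent global input, to complete your proof.
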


\begin{proof}
Necessarily from the duality \eqref{fusionDuality} $R/I$ is a finitely generated $\set Z$-module, so Lemma \ref{shiftingLemma} applied to the ring $\set Z$, the $\set Z$-module $R/I$ and distinguished element $x = X+I \in R/I$ gives us
\begin{equation*}
  \Ext_{\set Z}^i(R/I, \set Z) \simeq \Ext_{\set Z[X]}^{i+1}(R/I,\set Z[X]).
\end{equation*}
Now $R/I$ is still finitely generated as an $\set Z[X]$-module so the lemma applied once more with the element $y = Y+I \in R/I$ gives us
\begin{equation*}
  \Ext_{\set Z[X]}^{i+1}(R/I,\set Z[X]) \simeq \Ext_{\set Z[X,Y]}^{i+2}(R/I,\set Z[X,Y]).
\end{equation*}
With $i=0$ and the assumption we get
\begin{equation*}
  \Ext_R^2(R/I,R) \simeq \Hom_{\set Z}(R/I,\set Z) \simeq R/I.
\end{equation*}

Pick a unit $e \in R/I$. With the identification $\Ext_R^2(R/I,R) \simeq \Ext_R^1(I,R)$ this element correspond to a nonsplit short exact sequence
\begin{equation}\label{ses}
  0 \to R \to M \to I \to 0.
\end{equation}
The goal is to show that $M \simeq R^2$ for then the image of two generators under the surjection in \eqref{ses} will generate $I$. 

We prove first that $\Ext_R^i(M,R) = 0$ for all $i \geq 1$. Consider the long exact sequence associated to \eqref{ses} 
\begin{equation*}
  \Hom_R(R,R) \stackrel{p}{\to} \Ext_R^1(I,R) \to \Ext_R^1(M,R) \to \Ext_R^1(R,R)=0.
\end{equation*}
By construction we have $p(\id) = e \in \Ext_R^1(I,R) \simeq R/I$. Choose an $f \in R$ with $e^{-1} = f+I$. Since $p$ is an $R$-homomorphism $p(f) = fe = (f+I)e = 1$ so $p$ is surjective and $\Ext_R^1(M,R) = 0$. Let now $i \geq 2$. Since $R/I$ is Gorenstein it is Cohen-Macaulay and localizing at a prime ideal $\ideal p \in R$ containing $I$ the Auslander-Buchsbaum formula gives us $\pd_R(R/I) = 2$. Therefore $0 = \Ext_R^{i+1}(R/I,R) \simeq \Ext_R^i(I,R) \simeq \Ext_R^i(M,R)$.

Then induction on the length of a projective resolution on a given module $N$ gives $\Ext_R^i(M,N) = 0$ for all $i \geq 1$, i.e. $M$ is projective.

Now \cite[Theorem 4]{Qui76} says that all projective modules over $\set Z[X,Y]$ are actually free so $M \simeq R^k$. Choose any prime ideal $\ideal p \subseteq R$ not containing $I$. Localizing \eqref{ses} at $\ideal p$ we get
\begin{equation*}
  0 \to R_{\ideal p} \to M_{\ideal p} \to R_{\ideal p} \to 0
\end{equation*}
showing that $k=2$. Here we used that $I \cap (R \setminus \ideal p) \neq \emptyset$ so $I_{\ideal p}$ contains a unit. 
\end{proof}

\begin{remark}
Much of the theory here generalizes to higher ranks. When we consider a general ideal $I \subseteq R = \set Z[X_1, \dots, X_r]$ such that $\Hom_{\set Z}(R/I, \set Z) \simeq R/I$ then Lemma \ref{shiftingLemma} applied $r$ times still gives us
\begin{multline*}
  R/I \simeq \Hom_{\set Z}(R/I, \set Z) \simeq \Ext_{\set Z[X_1]}^1(R/I, \set Z[X_1]) \simeq \\
 \dots \simeq \Ext_{\set Z[X_1, \dots, X_r]}^r(R/I, \set Z[X_1, \dots, X_r]).
\end{multline*}
Locally the Auslander-Buchsbaum formula still gives us $\pd_R(R/I) = r$ so $\Ext_R^i(R/I,R) = 0$ for $i > r$. We also have our tool to give us global information from local data: By \cite[Theorem 4]{Qui76} any projective $\set Z[X_1, \dots, X_r]$ is free. However only for $r=2$ is a non-trivial extension of $I$ by $R$ in $\Ext_R^{r-1}(I,R) \simeq R/I$ equivalent to a non-split short exact sequence which is the beginning of our construction in the proof. 

In \cite{Ser63} it was proved that a quotient of a regular local ring of codimension 2 is a complete intersection ring if and only if it is Gorenstein. After more than 50 years this result has not been generalized to higher codimensions suggesting that a generalization of Theorem \ref{UNCproof} to higher ranks is not possible without further assumptions on the ideal. 
\end{remark}

\bibliography{bibl}

\end{document}